\newtheorem{problem}{Question}
\newtheorem{theorem}[problem]{Theorem}
\newtheorem{lemma}[problem]{Lemma}
\newtheorem{conjecture}[problem]{Conjecture}
\title{On Intersecting Polygons}
\author{Kada K Williams}
\date{March 6, 2023}
\begin{document}

\maketitle

\begin{abstract}
Consider two regions in the plane, bounded by an $n$-gon and an $m$-gon, respectively. At most how many connected components can there be in their intersection? This question was asked by Croft. We answer this asymptotically, proving the bounds 
$$\left\lfloor \frac{m}{2}\right\rfloor \cdot \left\lfloor \frac{n}{2}\right\rfloor\le f(n,m)\le \left\lfloor \frac{m}{2}\right\rfloor \cdot \frac{n}{2} + \frac{m}{2}
$$
where $f(n,m)$ denotes the maximal number of components and $m\le n$. Furthermore, we give an exact answer to the related question of finding the maximal number of components if the $m$-gon is required to be convex: $\left \lfloor \frac{m+n-2}{2}\right\rfloor$ if $n\ge m+2$ and $n-2$ otherwise.
\end{abstract}

\section{Introduction}

Croft \cite{Cro} asked the following question.

\begin{problem}\label{original}
What is the maximal number of connected components in the intersection of the polygonal regions determined by an $n$-gon and an $m$-gon?
\end{problem}

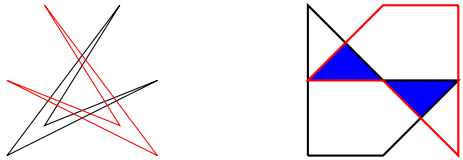
\begin{figure}[ht]
\centering
\begin{tikzpicture}
\draw (-4,0) -- (-2,1) -- (-3.5,0.4) -- (-2.5,2) -- (-4,0);
\draw[red] (-2,0) -- (-4,1) -- (-2.5,0.4) -- (-3.5,2) -- (-2,0);
\fill[blue] (1,1) -- (1.5,0.5) -- (2,1) -- (1,1);
\fill[blue] (0,1) -- (1,1) -- (0.5,1.5) -- (0,1);
\draw[thick] (0,0) -- (1,0) -- (2,1) -- (1,1) -- (0,2) -- (0,0);
\draw[thick,red] (2,2) -- (1,2) -- (0,1) -- (1,1) -- (2,0) -- (2,2);
\end{tikzpicture} 
\caption{intersecting polygons}
\label{examples}
\end{figure}

As usual, a \textit{polygon} is a piecewise linear simple closed curve in the plane. Since all polygons admit a triangulation, the $n$-gonal region can be partitioned into $n-2$ triangles, the $m$-gonal region into $m-2$. Now the intersection of convex regions is convex, every triangle is convex, and every convex region is either path-connected or empty. It follows that the answer is at most $(m-2)(n-2)$, hence finite. Furthermore, supposing that the $m$-gon is convex, there are at most $n-2$ components.

\begin{problem}\label{related}
What is the maximal number of connected components in the intersection of a region bounded by an $n$-gon and a convex $m$-gon?
\end{problem}

In Figure \ref{examples}, the yellow regions are in different connected components if the polygons are viewed as open, whereas if the polygons are closed, there is only one connected component. Yet as we will see, the maximal number of connected components in Questions \ref{original} and \ref{related} does not depend on whether the polygonal regions are open or closed. Essentially, we will deform the polygons slightly if these open overlaps have a boundary point in common. We say that an {\it overlap} is a connected component in the intersection of an open $n$-gon and an open $m$-gon.

We claim that every overlap is a polygon, and that the boundary of an overlap consists of segments contained by the perimeter of the $n$-gon or the $m$-gon. To this end, let us first consider the convex polygons formed by intersecting $n-2$ triangles with $m-2$ triangles in their respective triangulations. The overlaps arise when we glue these polygons along the inner diagonals, and so are polygons as well. Their boundary segments are contained not by inner diagonals, but by sides of the $n$-gon or $m$-gon.

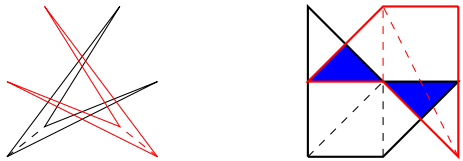
\begin{figure}[ht]
\centering
\begin{tikzpicture}
\draw (-4,0) -- (-2,1) -- (-3.5,0.4) -- (-2.5,2) -- (-4,0);
\draw[red] (-2,0) -- (-4,1) -- (-2.5,0.4) -- (-3.5,2) -- (-2,0);
\draw[thin,dashed] (-4,0) -- (-3.5,0.4);
\draw[red,thin,dashed] (-2,0) -- (-2.5,0.4);
\fill[blue] (1,1) -- (1.5,0.5) -- (2,1) -- (1,1);
\fill[blue] (0,1) -- (1,1) -- (0.5,1.5) -- (0,1);
\draw[thick] (0,0) -- (1,0) -- (2,1) -- (1,1) -- (0,2) -- (0,0);
\draw[thick,red] (2,2) -- (1,2) -- (0,1) -- (1,1) -- (2,0) -- (2,2);
\draw[thin,dashed] (0,0) -- (1,1);
\draw[thin,dashed] (1,0) -- (1,1);
\draw[red,thin,dashed] (2,0) -- (1,2);
\draw[red,thin,dashed] (1,1) -- (1,2);
\end{tikzpicture} 
\caption{overlaps}
\label{examples}
\end{figure}

When intersecting a closed $n$-gon and a closed $m$-gon, it is possible that the boundaries of two overlaps meet, or that a one-dimensional connected component occurs from a vertex or side of one polygon tangent to the side of another. Observe that this is only possible if three side lines of the $n$-gon and $m$-gon pass through a point. We resolve this as follows.

\begin{figure}[ht]
\centering
\begin{tikzpicture}
\draw (0,-0.5) -- (0.5,0) -- (2.5,0) -- (3,-0.5);
\draw[thin,dashed] (0,0) -- (3,0);
\draw[->] (-1,0) -- (-1,0.5);
\draw[thin,red] (1.8,1) -- (2,0) -- (2.4,1) -- (2.4,-0.5) -- (2.6,1.3);

\draw (5,-0.5) -- (6,0.5) -- (7,0.5) -- (8,-0.5);
\draw[thin,dashed] (5,0.5) -- (8,0.5);
\draw[thin,red] (6.8,1) -- (7,0) -- (7.4,1) -- (7.4,-0.5) -- (7.6,1.3);
\end{tikzpicture}
\caption{deformation, before and after}
\label{examples}
\end{figure}
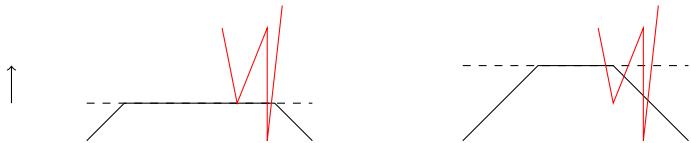

Given a side, let us translate its line by a perpendicular vector such that we increase the width of any component it contains, we do not let the line move across the finitely many points where two side lines meet, and the vector has smaller length than the positive width overlaps. Iterating this, the number of times three side lines concur can be decreased to zero, while the number of components does not decrease. Therefore, in our investigations, we may assume that no three side lines pass through a point, and that the polygons are open.

Let $f(n,m)$ denote the maximal number of overlaps (equivalently, of connected components) between an $n$-gonal and an $m$-gonal region. Our main result is the following.

\begin{theorem}\label{gen}
$\left\lfloor \frac{m}{2}\right\rfloor \cdot \left\lfloor \frac{n}{2}\right\rfloor\le f(n,m)\le \left\lfloor \frac{m}{2}\right\rfloor \cdot \frac{n}{2} + \frac{m}{2}
$
\end{theorem}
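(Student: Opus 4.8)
The plan is to prove the two inequalities separately: the lower bound by an explicit construction of two interlocking polygons, and the upper bound by a direct crossing-and-corner count. Throughout the upper bound I will use the reduction already established in the excerpt, assuming both regions are open and that no three side lines concur, so that the boundaries $\partial A$ of the $m$-gon and $\partial B$ of the $n$-gon meet in finitely many transversal crossings only.

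For the lower bound I would build a grid of overlaps from two perpendicular \emph{combs}. Let the $m$-gon $A$ consist of a thin vertical spine carrying $\lfloor m/2\rfloor$ thin horizontal teeth, and the $n$-gon $B$ of a thin horizontal base carrying $\lfloor n/2\rfloor$ thin vertical teeth; a comb with $k$ teeth uses $2k+1$ vertices, which is exactly the budget available when $m$ (resp. $n$) is odd. Placing the spine of $A$ strictly to the left of, and the base of $B$ strictly below, the region where the teeth cross, the only two-dimensional overlaps are the $\lfloor m/2\rfloor\cdot\lfloor n/2\rfloor$ little quadrilaterals where a horizontal tooth of $A$ meets a vertical tooth of $B$. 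These are pairwise separated: to pass from one to another inside $A\cap B$ one would have to move along $A$, whose teeth reconnect only across the spine, while simultaneously staying inside $B$, whose teeth reconnect only across the base, and both the spine and the base have been pushed outside the other polygon. The one point needing care is parity: when $m$ or $n$ is even the naive comb falls one tooth short of $\lfloor m/2\rfloor$ (resp. $\lfloor n/2\rfloor$), and I would recover the missing tooth by a local modification of the spine or base that creates no spurious merges.

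For the upper bound, recall that each overlap is a face of the arrangement of $\partial A\cup\partial B$ lying inside both regions, and examine the local picture at a transversal crossing: the two boundary arcs cut a neighbourhood into four sectors, of which exactly one lies inside both $A$ and $B$. Hence every crossing is a corner of exactly one overlap, so if $c$ is the number of crossings and $k(O)$ counts the crossing-corners of an overlap $O$, then $\sum_O k(O)=c$. Since the boundary of an overlap alternates between arcs of $\partial A$ and arcs of $\partial B$, any overlap meeting both boundaries has an even number $k(O)\ge 2$ of crossing-corners, with $k(O)=2$ exactly for a ``lens'' bounded by a single arc of each polygon. Writing $L$ for the number of lenses, this gives $N\le (c+2L)/4$ for the number $N$ of overlaps carrying a crossing. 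The factor $\lfloor m/2\rfloor$ now enters through the estimate on $c$: any line meets the interior of the $m$-gon in at most $\lfloor m/2\rfloor$ segments, since it crosses $\partial A$ an even number of times that is at most $m$; consequently each of the $n$ edges of $B$ crosses $\partial A$ at most $2\lfloor m/2\rfloor$ times, so $c\le 2n\lfloor m/2\rfloor$ and $c/4\le \lfloor m/2\rfloor\cdot \frac n2$.

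It remains to absorb the corrections, and this is where I expect the real work to lie. I would bound the number $L$ of lenses together with any nested components (overlaps carrying no crossing at all) by $m$, charging each to a distinct edge of the $m$-gon, so that the combined correction contributes at most $\frac m2$ and the count closes as $N\le \lfloor m/2\rfloor\cdot\frac n2+\frac m2$. The main obstacle is precisely this control of the low-complexity overlaps: the clean bound $N\le c/4$ holds only when every overlap is genuinely four-cornered, as in the extremal grid of the lower bound, and pinning down the exact additive term $\frac m2$ requires showing that the two-cornered lenses and the nested pieces cannot be too numerous. The crossing estimate and the local four-sector analysis are routine by comparison; the subtlety is entirely in the book-keeping that separates the generic four-cornered overlaps from the exceptional ones.
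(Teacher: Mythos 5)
Your lower bound is in substance the paper's own construction (two interlocking saws, Figure \ref{saw}), though one detail is off: a comb with \emph{thin rectangular} teeth on a spine costs roughly $4k$ vertices for $k$ teeth, not the $2k+1$ you claim. The count $2k+1$ is achieved only if adjacent teeth are triangular and share their base (valley) vertices, i.e.\ if the ``comb'' is really a zigzag saw as in the paper; read that way, your construction and parity fix match the paper's.

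The upper bound is where the genuine gap lies, and it sits exactly at the step you flag as ``book-keeping.'' Your proposed charge $L\le m$ --- each lens to a distinct edge of the $m$-gon --- is false. A lens in your sense ($k(O)=2$, one arc of each boundary) need not be vertex-free: the arc of $\partial B$ may contain a vertex of the $n$-gon. Take $m=3$ with one long edge of the triangle slicing across all the teeth of an $n$-gon saw: each tooth tip poking inside the triangle is an overlap bounded by a single sub-segment of that one triangle edge and a single arc of $\partial B$ through the tip, hence a lens with $k(O)=2$. This produces about $\lfloor n/2\rfloor$ lenses all charged to the \emph{same} edge of the $m$-gon, so $L$ can be of order $n$, not $m$, and your assembled bound degrades to $N\le \lfloor m/2\rfloor\cdot\frac n2+\frac n4$. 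The structural reason your currency fails is that a crossing count treats an $m$-vertex lens and an $n$-vertex lens identically (both contribute $2$ to $c$), yet only the former can be paid for by the $m$-gon. The paper's count avoids this by using a different resource: it counts, for each overlap, the number $k$ of its sides lying on the $n$-gon, with per-edge budget $\lfloor m/2\rfloor$ (so $\sum_k kF_k\le n\lfloor m/2\rfloor$), and reserves the $m$-gon budget only for overlaps with $k\le 1$ via $3F_0+F_1\le m$, since two consecutive overlap sides on the $m$-gon must meet at an $m$-gon vertex and overlaps are disjoint. In that accounting your troublesome $n$-vertex lens has $k=2$ and needs no subsidy at all, so adding the two inequalities gives coefficient at least $2$ on every $F_k$ and $2F\le n\lfloor m/2\rfloor+m$ closes immediately. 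To repair your argument you would have to split the $k(O)=2$ overlaps by which polygon contributes the vertex and then strengthen the per-edge crossing bound for the $n$-vertex case --- at which point you have essentially reconstructed the paper's segment count, so the crossing-based route as proposed does not yield the theorem without that restructuring.
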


Furthermore, we give an exact answer to the convex version of the question.

\begin{theorem}\label{convex} Let us intersect an $n$-gonal region with a convex $m$-gonal region. The maximal number of overlaps is $\left\lfloor \frac{m+n-2}{2}\right\rfloor$ if $n\ge m+2$ and $n-2$ otherwise.
\end{theorem}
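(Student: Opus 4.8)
Write $N$ for the number of overlaps produced by a fixed convex $m$-gon $P$ and $n$-gon $Q$, assumed (as the introduction permits) to be open and in general position. My plan is to match an upper bound, obtained as the minimum of two estimates, with two constructions, one for each regime. The first estimate is the triangulation bound $N\le n-2$ already noted in the introduction. The second is a convexity bound $N\le\left\lfloor\frac{m+n-2}{2}\right\rfloor$, i.e.\ $2N\le m+n-2$. Since $n-2\le\left\lfloor\frac{m+n-2}{2}\right\rfloor$ precisely when $n\le m+2$, with equality at $n=m+1$ and $n=m+2$, the minimum of the two estimates is $n-2$ for $n\le m+1$ and $\left\lfloor\frac{m+n-2}{2}\right\rfloor$ for $n\ge m+2$; this is exactly the claimed value, so it remains to prove the convexity bound and to exhibit tight constructions.

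For the bookkeeping I would analyse the arrangement $G=\partial P\cup\partial Q$. If the two boundaries cross in $2k$ points, then in general position $G$ has $V=m+n+2k$ vertices and $E=m+n+4k$ edges, hence $2k+1$ bounded faces. Classifying these by membership in each region gives $N$ overlaps, $A$ faces in $P\setminus Q$, $B$ faces in $Q\setminus P$, and $C$ faces outside both. The $k$ disjoint arcs of $\partial Q$ inside the convex disk $P$ cut it into $k+1$ pieces, so $N+A=k+1$; symmetrically the $k$ arcs of $\partial P$ inside $Q$ give $N+B=k+1$. Hence $A=B=k+1-N$ and $C=N-1$; in particular $N\le k$, and the dual graph of the non-crossing lids inside $P$ is a tree on $k+1$ nodes, two-coloured by membership in $Q$, with $N$ nodes of the inner colour.

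The hard part is the passage from this combinatorics to $2N\le m+n-2$. The obstacle is that $k$ is useless as a bound: two convex polygons already cross up to $2\min(m,n)$ times yet meet in a single overlap, so $N$ can be far below $k$ and no estimate on the crossing number will do. Convexity has to be fed in differently, to show that the inner-coloured leaves of the tree each carry a \emph{private witness} on their boundary, namely either a side of $P$ used by no other overlap or a reflex turn of $\partial Q$. I would try to make this precise by tracing the single closed curve $\partial Q$ through $P$, following the non-crossing matching that its lids induce on the $2k$ boundary points together with the two-colouring, and amortising: moving from one overlap to the next around the tree must spend, on average, one hitherto-unused side of $P$ or of $Q$, yielding $2N\le m+n$, with the final $-2$ reflecting the closure of $\partial Q$ (equivalently $C=N-1\ge 0$). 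I expect this amortisation to be the crux, exactly because the bound is additive rather than multiplicative and the crossing count carries no information.

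The constructions split along the same threshold. For $n\le m+1$ I would realise $n-2$ by choosing a star-shaped $Q$ whose $n-2$ protrusions point outward along a convex arc and letting the convex boundary $\partial P$ weave between them, crossing $\partial Q$ twice on almost every edge so that $P\cap Q$ breaks into $n-2$ components; double-crossing an edge forces $\partial P$ to turn between the two crossings, spending one vertex of $P$, which is why this succeeds only once $m\ge n-1$. For $n\ge m+2$ I would start from a gear-shaped $Q$ with teeth poking through the sides of $P$, contributing about $n/2$ overlaps, and augment it with chord-edges shaving the corners of $P$, each corner yielding one further overlap; balancing teeth against corner-chords gives $\left\lfloor\frac{m+n-2}{2}\right\rfloor$. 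Finally I would confirm that both configurations survive the open/closed and general-position reductions of the introduction.
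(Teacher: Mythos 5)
Your setup and bookkeeping are sound: the face counts $N+A=k+1$, $N+B=k+1$, $C=N-1$ from Euler's formula are correct, and your observation that no bound via the crossing number $2k$ can work (two convex polygons cross many times yet overlap once) is a genuine insight. But the proof has a hole exactly where you yourself locate the crux: the inequality $2N\le m+n-2$ is never established. The amortisation over the tree of lids is only announced (``I would try to make this precise,'' ``I expect this amortisation to be the crux''), and the claim that the final $-2$ ``reflects the closure of $\partial Q$ (equivalently $C=N-1\ge 0$)'' is not an argument: $C\ge 0$ holds in every configuration by your own face count, so it cannot by itself improve a bound of $m+n$ to $m+n-2$. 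Since the triangulation bound $N\le n-2$ only covers $n\le m+2$, the unproven convexity bound is the entire content of the theorem in the regime $n\ge m+2$, and the constructions are likewise left as sketches; in particular the exact value $\left\lfloor \frac{m+n-2}{2}\right\rfloor$, rather than roughly $(m+n)/2$, requires explicit side-accounting that you do not carry out.

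For comparison, the paper gets $2F\le m+n$ with no tree and no amortisation, by a direct double count: classify overlaps by the number $k$ of their sides lying on the $n$-gon, with $F_k$ overlaps of each kind. Convexity of the $m$-gon means each side of the $n$-gon meets it in at most one subsegment, hence bounds at most one overlap, giving $\sum_k kF_k\le n$; and every overlap with $k\le 1$ contains vertices of the $m$-gon (three if $k=0$, at least one if $k=1$), which are used at most once since overlaps are disjoint, giving $3F_0+F_1\le m$. Adding the two makes every coefficient at least $2$. The $-2$ then comes not from Euler characteristic but from a short case analysis on the sides of the $n$-gon lying on its convex hull (there are at least two): either they miss the $m$-gon, improving the first inequality to $\sum kF_k\le n-2$; or the $m$-gon has two vertices outside the $n$-gon, improving the second to $3F_0+F_1\le m-2$; or it has exactly one vertex outside, in which case each inequality improves by $1$. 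Your ``private witness'' heuristic is morally a shadow of this count --- each overlap must spend either sides of the $n$-gon or vertices of the $m$-gon --- and if you push the amortisation through you will likely be forced to reinvent it; the cleaner route is to graft the paper's classification by $F_k$ directly onto your arrangement setup and drop the tree. Your constructions (a concave arc woven by a convex polygon for $n\le m+1$; teeth plus corner-shaving chords for $n\ge m+2$) do match the paper's in spirit, where an $(m+2)$-gon built as a concave arc of $m$ sides over a V shape yields $m$ overlaps and each appended tooth buys one overlap for two sides, so they can be completed --- but as written the upper bound, not the constructions, is the fatal gap.
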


\begin{proof}[Proof of Theorem \ref{gen}, lower bound] Intersecting $\left\lfloor \frac{m}{2}\right\rfloor$ 'teeth' with $\left\lfloor \frac{n}{2}\right\rfloor$ 'teeth' as in Figure \ref{saw} yields $\left\lfloor \frac{m}{2}\right\rfloor \cdot \left\lfloor \frac{n}{2}\right\rfloor$ overlaps. Clearly, the direction of the end segments is such that they meet. Possibly with an additional side in the case when $m$ is odd or when $n$ is odd, this creates an $n$-gon and an $m$-gon. \end{proof}

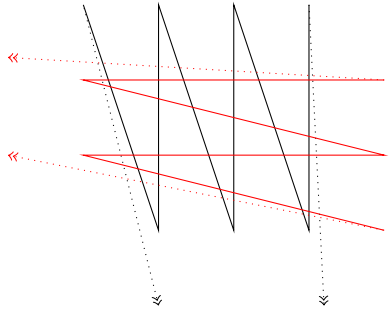
\begin{figure}[ht]
\centering
\begin{tikzpicture}
\draw (0,5) -- (1,2) -- (1,5) -- (2,2) -- (2,5) -- (3,2) -- (3,5);
\draw[<<-, dotted] (1,1) -- (0,5);
\draw[->>, dotted] (3,5) -- (3.2,1);
\draw[thin,red] (4,2) -- (0,3) -- (4,3) -- (0,4) -- (4,4);
\draw[<<-,thin,red, dotted] (-1,3) -- (4,2);
\draw[->>,thin,red, dotted] (4,4) -- (-1,4.3);
\end{tikzpicture} 
\caption{double dentures}
\label{saw}
\end{figure}

Proving the upper bound in Theorem \ref{gen} involves the following lemma. We shall use standard notation -- see e.g. \cite{BMS}.

\begin{lemma}\label{alt}
If an overlap does not contain a vertex of the $m$-gon or $n$-gon, then it has an even number of sides.
\end{lemma}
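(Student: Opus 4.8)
The plan is to two-color the edges of the overlap and show that the colors must alternate. Since each overlap is a polygon, its boundary is a single simple closed polygonal curve, and by the discussion preceding the lemma each boundary segment lies on a side of the $n$-gon or on a side of the $m$-gon. I would color a segment \emph{black} if it lies on a side of the $n$-gon and \emph{red} if it lies on a side of the $m$-gon. Because we have reduced to the case where no three side lines concur, no side of one polygon can lie along a side of the other, so this coloring is well defined: each segment receives exactly one color.

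Next I would analyze the vertices of the overlap. Traversing the boundary cycle, at each vertex two consecutive segments meet, and they must lie on distinct lines (otherwise they would be collinear and constitute a single segment rather than two). If both incident segments were black, they would lie on two distinct sides of the $n$-gon sharing that point; since the $n$-gon is a \emph{simple} polygon, two of its sides meet only at a vertex of the $n$-gon, so the point would be a vertex of the $n$-gon. Symmetrically, two red segments can meet only at a vertex of the $m$-gon.

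Now I would invoke the hypothesis. First note that, under the no-three-concurrent assumption, a vertex of either polygon (being the intersection of two of its own side lines) cannot lie on a side line of the other polygon; hence if the overlap contains no vertex of either polygon, no such vertex can lie even in the relative interior of a boundary segment, and in particular no vertex of the overlap is a vertex of the $n$- or $m$-gon. By the previous paragraph this rules out monochromatic vertices: every vertex of the overlap is a transversal crossing of an $n$-side with an $m$-side, so its two incident segments are one black and one red. Therefore the colors strictly alternate around the boundary cycle, and a cyclic sequence of segments alternating between two colors has even length, giving the overlap an even number of sides.

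The step I expect to be the crux is the vertex analysis — establishing that a vertex of the overlap at which two segments of the \emph{same} color meet must be a genuine vertex of the corresponding polygon. This rests squarely on the simplicity of the two polygons (so that distinct sides of one polygon meet only at its vertices) together with the general-position assumption (so that polygon vertices do not land on the opposite polygon's sides). Once this is in place, the rest is routine bookkeeping on the single boundary cycle.
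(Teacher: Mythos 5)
Your proof is correct and takes essentially the same approach as the paper: two consecutive sides of the overlap lying on the same polygon must meet at a vertex of that polygon, so in the absence of such vertices the sides alternate between the $n$-gon and the $m$-gon, forcing an even count. Your extra care (the well-definedness of the two-coloring and the general-position observation that a vertex of one polygon cannot lie on a side line of the other) simply makes explicit what the paper leaves implicit.
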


\section{Three special cases}

\begin{figure}[ht]
\centering
\begin{tikzpicture}
\draw (0,0) -- (-4,2) -- (-1,1) -- (1,1) -- (4,2) -- (0,0);
\draw[thin,red] (-2.8,1.5) -- (0,0.9) -- (2.8,1.5) -- (-2.8,1.5);
\draw[thin, dashed] (0,0) -- (-1,1);
\draw[thin, dashed] (0,0) -- (1,1);
\end{tikzpicture} 
\caption{$f(5,3)=3$}
\label{3,5}
\end{figure}

In the case $m=3$ and $n=5$, by triangulating the pentagon, it is clear that at most $3$ overlaps emerge. Moreover, as many as $3$ overlaps can be attained. 

In the case $m=4$ and $n=4$, the number of overlaps is $\le 2\cdot 2=4$, because each quadrilateral is made up of two triangles. This many overlaps can be attained, see Figure \ref{4,4}.

\begin{figure}[ht]
\centering
\begin{tikzpicture}
\draw (0,0) -- (-2,4) -- (0,1) -- (2,4) -- (0,0);
\draw[thin,red] (3,2) -- (-2,1) -- (1.5,2) -- (-2,3) -- (3,2);
\draw[thin, dashed] (0,0) -- (0,1);
\draw[thin, red, dashed] (3,2) -- (1.5,2);
\end{tikzpicture} 
\caption{$f(4,4)=4$}
\label{4,4}
\end{figure}

In the case $m=5$ and $n=5$, by tweaking the construction for $n=3$, we can achieve $5$ overlaps. On the other hand, our proof that no more than $5$ can be had is more subtle. First, we note that a triangle can only form $3$ overlaps with a pentagon if the pentagon can be partitioned into no less than $3$ convex regions. This is only possible if the pentagon has three sides proceeding along a concave arc, met by the triangle according to Figure \ref{3,5}. 

If neither pentagon has this shape, then at most $2\cdot 2=4$ overlaps are possible. If one pentagon has this shape, then of its three triangle parts, the outer two are intersected at most twice, or else the other pentagon is determined. In particular, if an outer triangle meets the other pentagon in three regions, then two of these regions border an inner diagonal, thus forming no more than $\frac12$ of an overlap (except for an option where these three are all the overlaps). As for the central triangle, it contains at most $1+\frac12+\frac13$ overlaps (except for an option where the other pentagon meets no inner diagonal, with two overlaps from the central triangle and at most one from each outer one). In total, there are certainly fewer than $6$ overlaps, whichever case it is. Therefore, $f(5,5)=5$.

\begin{figure}[ht]
\centering
\begin{tikzpicture}
\draw (0,0) -- (-5,4) -- (-1,1) -- (1,1) -- (5,4) -- (0,0);
\draw[thin,red] (-3.5,1.7) -- (0,0.85) -- (3.7,1.7) -- (-5,3) -- (3.1,1.7) -- (-3.5,1.7);
\end{tikzpicture} 
\caption{construction for $f(5,5)=5$}
\label{5,5}
\end{figure}
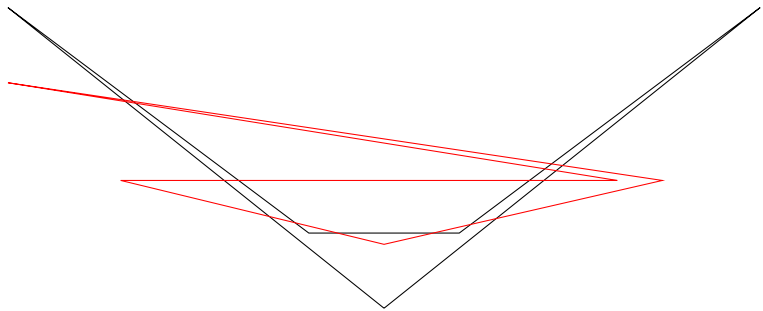

\section{The proofs}

Let us show that if an overlap does not contain a vertex of the $m$-gon or $n$-gon, then it has an even number of sides.

\begin{proof}[Proof of Lemma \ref{alt}]
The key is to inspect two consecutive sides of an overlap. Recall that any side of an overlap is contained by the boundary of the $m$-gon or $n$-gon. If two consecutive sides belong to the $m$-gon, then the vertex where they meet must be a vertex of the $m$-gon, and similarly for the $n$-gon. Therefore, provided that an overlap does not contain such a vertex, its consecutive sides belong only to the $m$-gon or only to the $n$-gon, alternating between the two. Hence, this overlap has an even number of sides along its boundary.
\end{proof}

\begin{proof}[Proof of Theorem \ref{gen}, upper bound] Let there be given an $m$-gon and an $n$-gon with $F$ overlaps. Of these, let there be $F_k$ many with exactly $k$ sides belonging only to the $n$-gon. If an overlap has two consecutive sides on the $m$-gon, then they meet at a vertex of the $m$-gon. We may assume overlaps are disjoint, so each vertex of the $m$-gon occurs at most once this way. We deduce that $3F_0+F_1\le m$, as surely all vertices are such if all sides belong to the $m$-gon, and surely at least one vertex is such if there is just one exception.

The essential idea is to double-count the overlap side segments on the $n$-gon. Looking at one single side of the $n$-gon, it contains at most $m$ points of intersection with the $m$-gon, and so contains at most $\left\lfloor \frac{m}{2}\right\rfloor$ segments bounding some overlap. This amounts to $n\cdot \left\lfloor \frac{m}{2}\right\rfloor$ segments in total. On the other hand, there are $k$ segments on $F_k$ many overlaps. Therefore,
$$\sum_k kF_k\le n\cdot \left\lfloor \frac{m}{2}\right\rfloor.$$
Adding to this the inequality $3F_0+F_1\le m$, the coefficient of each $F_k$ on the left-hand side is at least $2$. Thus, 
$$2F\le n\cdot \left\lfloor \frac{m}{2}\right\rfloor+m.$$
Dividing by $2$ implies the upper bound $f(n,m)\le \left\lfloor \frac{m}{2}\right\rfloor \cdot \frac{n}{2} + \frac{m}{2}$.
\end{proof}

\begin{figure}[ht]
\centering
\begin{tikzpicture}
\draw (0,1) -- (-4,4) -- (-1,2) -- (1,2) -- (1,3) -- (1.2,2.2) -- (4,4) -- (0,1);
\draw[thin,red] (-2.8,3) -- (0,1.8) -- (2.8,3) -- (-2.8,3);
\end{tikzpicture} 
\caption{appending a tooth}
\label{tooth}
\end{figure}
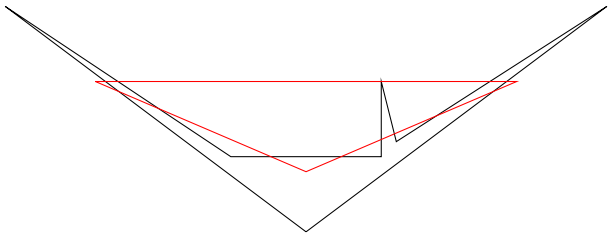

Next, let us see how to maximise the number of overlaps with a convex $m$-gon.

\begin{proof}[Proof of Theorem \ref{convex}] With regard to Figure \ref{3,5}, if a concave arc of $m$ sides is formed over a $V$ shape, the resulting $(m+2)$-gon is intersected in $m$ overlaps by a convex $m$-gon with vertices near the midpoints of these sides. Thus, if $n=m+2$, there can be $n-2=m$ overlaps. If $m$ is increased, there still can be $n-2$ overlaps. If $n$ is increased, then at the cost of two sides, a 'tooth' can be appended as in Figure \ref{tooth}, causing $\left\lfloor \frac{n-(m+2)}{2} \right\rfloor$ more overlaps.

As the $n$-gon can be partitioned into $n-2$ triangles, each of which overlap the convex $m$-gon at most once, there can be no more than $n-2$ overlaps in the case $n<m+2$. Let us further prove that there can be no more than $m+\left\lfloor \frac{n-(m+2)}{2} \right\rfloor=\left\lfloor \frac{m+n-2}{2}\right\rfloor$ overlaps in the case $n\ge m+2$.

Following the same notation as in the previous proof, the number of overlap side segments on the $n$-gon is $\sum kF_k$, where $3F_0+F_1\le m$. However, in this problem, each side of the $n$-gon creates at most one segment bounding some overlap, and so $\sum kF_k\le n$. Adding these yields
$$2F\le m+n.$$
Our strategy is to improve this bound by $2$. Let us regard the sides of the $n$-gon which belong to its convex hull, of which there are at least $2$. Should these fail to meet the $m$-gon, clearly $\sum kF_k\le n-2$. If the $m$-gon has two vertices outside the $n$-gon, $3F_0+F_1\le m-2$. Finally, if the $m$-gon has but one vertex outside the $n$-gon, then the convex hull of the $n$-gon is only met by two consecutive sides of the $m$-gon, so only one overlap is bounded with the convex hull. In this case, $3F_0+F_1\le m-1$, but also $\sum kF_k\le n-1$. Therefore,
$$2F\le m+n-2.$$
It follows that no more than $\left\lfloor \frac{m+n-2}{2}\right\rfloor$ overlaps can be attained.
 \end{proof}

\section{Concluding remarks}

Despite our efforts to approximate $f(n,m)$, Problem \ref{original} remains open. As we saw in our proof that $f(5,5)=5$, if an overlap does necessarily involve a vertex of the $m$-gon, then it takes up considerable space. Our upper bound appears to be wasteful in general. Therefore, we may conjecture the following.

\begin{conjecture}
The answer to Problem \ref{original} is $f(n,m)=\left\lfloor \frac n2\right\rfloor \cdot \left\lfloor \frac m2 \right\rfloor+\epsilon$, where $\epsilon=1$ for odd values of $n$ and $m$, and $\epsilon=0$ otherwise.
\end{conjecture}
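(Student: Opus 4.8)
The plan is to split the conjecture into its two inequalities and attack them by quite different means. Since the bounds of Theorem~\ref{gen} already agree to leading order (both are $\frac{mn}{4}$ up to lower-order terms), the entire content of the conjecture lies in pinning down a correction of size $\Theta(m)$. This is what makes it delicate: any purely leading-order or side-by-side local count, such as the double-counting of Theorem~\ref{gen}, is structurally incapable of distinguishing $\lfloor n/2\rfloor\cdot\frac n2$ from $\lfloor n/2\rfloor\lfloor m/2\rfloor$.

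For the lower bound I would exhibit a construction attaining $\lfloor n/2\rfloor\lfloor m/2\rfloor+\epsilon$. The double denture of Figure~\ref{saw} already gives $\lfloor n/2\rfloor\lfloor m/2\rfloor$, so the only genuinely new case is $n,m$ both odd, where one extra overlap is required. Here I would imitate the passage to $f(5,5)=5$ recorded in Figure~\ref{5,5}: the two ``leftover'' half-teeth, present on each polygon precisely because $n$ and $m$ are odd, are bent so as to trap a single additional region, and I would verify that the side budgets still close up to an $n$-gon and an $m$-gon. I expect this half to be essentially routine once the correct picture is drawn.

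The upper bound is the crux. I would pass to the arrangement of the two boundaries and count overlaps as faces. Writing $a_j$ and $b_j$ for the numbers of maximal $n$-gon arcs and $m$-gon arcs on the boundary of the $j$-th overlap, the two kinds of arc alternate around each overlap, so $a_j=b_j$, and each boundary crossing lies on exactly one overlap; hence $\sum_j a_j=\sum_j b_j=\frac I2$, where $I$ is the number of boundary crossings. Lemma~\ref{alt} now supplies the decisive dichotomy: a vertex-free overlap has an even number of sides and therefore $a_j=b_j\ge 2$, whereas the cheap overlaps with $a_j=b_j=1$, together with the pocket overlaps having $a_j=0$ or $b_j=0$, must each swallow a polygon vertex. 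Thus $O_{\mathrm{free}}\le\frac14 I$, while the remaining overlaps are charged against the at most $m+n$ vertices. The floors should come from a parity-aware refinement of the per-side bound of Theorem~\ref{gen}: a side of the $n$-gon carries at most $\lfloor m/2\rfloor$ overlap-bounding segments, the only way to exceed this being a side both of whose endpoints lie inside the $m$-gon, and those endpoints are $n$-gon vertices, again payable out of the vertex budget. The isolated $+1$ should emerge as the single unavoidable parity defect available only when both $n,m$ are odd.

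The main obstacle is exactly the interaction I have papered over. The wastefulness of Theorem~\ref{gen} is the simultaneous appeal to the per-side maximum $\lfloor m/2\rfloor$ on \emph{every} side and to the full vertex budget, whereas saturating $I$ (every side fully crossed) forces the vertex-located overlaps to vanish, and conversely. I would try to make this trade-off quantitative, showing that the two resources jointly cost $\Theta(m)$ overlaps, either through a discharging scheme on the arrangement or, dually, through a linear-programming relaxation in which each overlap draws its arcs from sides and vertices subject to the alternation of Lemma~\ref{alt} and to convexity of the triangle pieces. Converting this heuristic incompatibility into a bound that is tight to the additive $\epsilon$ is the step I expect to resist, and is presumably why the authors leave the conjecture open.
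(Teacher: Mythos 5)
This statement is a \emph{conjecture}: the paper proves only Theorem~\ref{gen} and explicitly leaves Problem~\ref{original} open, so there is no proof of record to compare against, and your proposal does not supply one either --- as your own closing paragraph concedes. The lower-bound half is plausible and matches the paper's concluding remark (swap the lowest edge of the denture in Figure~\ref{saw} for a triangle reaching a new edge, as in Figure~\ref{3,5}, to gain one overlap when $n$ and $m$ are both odd), though you still owe the verification that the modified construction closes up with the correct side counts for all odd $n,m$, and note that the claim $\epsilon=0$ in the mixed-parity cases is an \emph{upper-bound} obligation, not a construction.

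The genuine gap is in the upper bound, and it can be located precisely: every inequality you actually write down re-derives (a weakened form of) Theorem~\ref{gen}. Your alternation count gives $\sum_j a_j = \frac{I}{2}$ and $O_{\mathrm{free}} \le \frac{I}{4}$; since each side of the $n$-gon carries at most $2\left\lfloor \frac{m}{2}\right\rfloor$ crossings, this yields $O_{\mathrm{free}} \le \frac{n}{2}\left\lfloor \frac{m}{2}\right\rfloor$, exactly the main term the paper already has, while charging the remaining overlaps to all $m+n$ vertices is strictly looser than the paper's $3F_0+F_1\le m$ (which charges $m$-gon vertices only), so additively you land at $F \le \frac{n}{2}\left\lfloor \frac{m}{2}\right\rfloor + m + n$. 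The only step that could beat Theorem~\ref{gen} is the quantitative crossing-budget-versus-vertex-budget trade-off, and for it you offer no discharging rules, no LP dual, no lemma --- but that trade-off \emph{is} the open problem: the paper's remark that a vertex-involving overlap ``takes up considerable space'' (as in the $f(5,5)=5$ analysis) is precisely the unformalized mechanism. Note also that in the case $n$ odd, $m$ even the conjecture demands shaving $\frac{1}{2}\left\lfloor \frac{m}{2}\right\rfloor = \Theta(m)$ off the pure crossing count, so the needed saving there is not an ``isolated parity defect'' but a linear-size loss whose source your scheme does not identify. Finally, one of your technical worries is unfounded: a side of the $n$-gon can never carry more than $\left\lfloor \frac{m}{2}\right\rfloor$ overlap-bounding segments, even when both its endpoints lie inside the $m$-gon, because the full line through the side meets the $m$-gon boundary in at most $m$ points and in an even number of them, and each endpoint interior to the $m$-gon forces at least one further crossing on the extension beyond it; so no vertex charge is needed for the per-side bound, and no extra capacity is hiding there.
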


The extra term $\epsilon$ is needed because in Figure \ref{saw}, a new overlap is made when swapping the lowest edge for a triangle reaching a new edge, as in Figure \ref{3,5}.

\medskip

\textbf{Acknowledgement.} The author would like to thank Imre Leader for helpful discussions and guidance in writing this paper.

\textsc{Department of Pure Mathematics and Mathematical Statistics, University of Cambridge, Wilberforce Road, Cambridge CB3 0WB.} \\ \\
\textit{E-mail address:} \texttt{kkw25@cam.ac.uk}

\end{document}